\def\marker{\>\hbox{${\vcenter{\vbox{
    \hrule height 0.4pt\hbox{\vrule width 0.4pt height 6pt
    \kern6pt\vrule width 0.4pt}\hrule height 0.4pt}}}$}\>}
\newtheorem{theorem}{Theorem} 
\newtheorem{theorem*}{Theorem} 
\newtheorem{lemma}[theorem]{Lemma}
\theoremstyle{definition}
\newtheorem{question}{Question}
\theoremstyle{remark}
\newcommand{\RR}{\mathbb{R}}
\newcommand{\ee}{\mathbf{e}}
\newcommand{\ds}{\displaystyle }
\newcommand{\CL}[1]{\left\lceil #1 \right\rceil}
\DeclareMathOperator{\sgn}{sgn}
\title{Unit Hypercube Visibility Numbers of Trees}
\author{Eric Peterson\footnote{{Dept.\ of Mathematics, Univ.\ of Rhode Island, Kingston, RI; {\tt epeterson11492@my.uri.edu}.}}\: and Paul S.\ Wenger\footnote{{School of Mathematical Sciences, Rochester Institute of Technology, Rochester, NY; {\tt pswsma@rit.edu}}}}
\begin{document}

\maketitle

\begin{abstract}
A visibility representation of a graph $G$ is an assignment of the vertices of $G$ to geometric objects such that vertices are adjacent if and only if their corresponding objects are ``visible" each other, that is, there is an uninterrupted channel, usually axis-aligned, between them.
Depending on the objects and definition of visibility used, not all graphs are visibility graphs.
In such situations, one may be able to obtain a visibility representation of a graph $G$ by allowing vertices to be assigned to more than one object.
The {\it visibility number} of a graph $G$ is the minimum $t$ such that $G$ has a representation in which each vertex is assigned to at most $t$ objects.

In this paper, we explore visibility numbers of trees when the vertices are assigned to unit hypercubes in $\RR^n$.
We use two different models of visibility: when lines of sight can be parallel to any standard basis vector of $\RR^n$, and when lines of sight are only parallel to the $n$th standard basis vector in $\RR^n$.
We establish relationships between these visibility models and their connection to trees with certain cubicity values.

{\bf Keywords: 05C62, visibility, cubicity}

\end{abstract}

\baselineskip18pt

\section{Introduction}

Broadly speaking, a visibility representation of a graph $G$ is an assignment of the vertices of $G$ to geometric objects embedded in an ambient space so that two vertices are adjacent if and only if there is a line of sight between their objects that intersects none of the other objects.
Visibility representations have been studied with a wide variety of geometric objects including bars~\cite{DHLM,TandT,Wis}, semi-bars~\cite{CCHMM}, rectangles~\cite{OnRVG,DH1,DH2}, points~\cite{Point}, and circular arcs~\cite{Arc,H2}.

Historically, the study of visibility representations was motivated by VLSI design.
Reflecting these roots in circuit design, it may be inappropriate to have vertices assigned to objects whose sizes can differ by arbitrary amounts, since the components of electronic circuits are roughly uniform in size.
A natural restriction is to require that all vertices are assigned to objects of the same size.
Such representations have been studied with bars~\cite{DGH,UnitBar} and rectangles~\cite{UnitRec}.

In this paper we study visibility representations in which vertices are assigned to unit hypercubes in $\RR^n$.
Throughout we will use the standard basis vectors $\ee_1,\ldots,\ee_n$ as  a basis for $\RR^n$.
We consider two different versions of visibility, which are inspired by unit bar visibility graphs and unit rectangle visibility graphs.
In a {\it unit bar visibility representation} of a graph, vertices are assigned to disjoint horizontal bars of length 1 in the plane, and two bars see each other if there is a vertical channel of positive width joining them that intersects no other bar (see Figure~\ref{fig:barvis}).
Unit bar visibility graphs were first studied by Dean and Veytsel~\cite{UnitBar}.
In a {\it unit rectangle visibility representation} of a graph, vertices are assigned to disjoint axis-aligned unit rectangles in the plane, and two rectangles see each other if there is a vertical or horizontal channel of positive width joining them that intersects no other rectangle (see Figure~\ref{fig:barvis}).
Unit rectangle visibility graphs were first studied by Dean, Ellis-Monaghan, Hamilton, and Pangborne~\cite{UnitRec}.

\begin{figure}
\centering
\begin{tikzpicture}

\draw[line width=1.5] (.25,0) -- (1.25,0);
\draw[line width=1.5] (1.5,0) -- (2.5,0);
\draw[line width=1.5] (.75,1) -- (1.75,1);
\draw[line width=1.5] (1,2) -- (2,2);
\draw[dashed, line width=1.5] (1.5,1) -- (1.5,2);
\draw[dashed, line width=1.5] (1.9,0) -- (1.9,2);
\draw[dashed, line width=1.5] (.875,0) -- (.875,1);
\draw[dashed, line width=1.5] (1.625,0) -- (1.625,1);
\node at (1.25,1.3) {$v_2$};
\node at (1.75,2.3) {$v_3$};
\node at (.75,-.4) {$v_1$};
\node at (2,-.4) {$v_4$};

\draw[fill] (5,1) circle (4pt);
\node at (5,1.3) {$v_1$};
\draw[fill] (6.5,1) circle (4pt);
\node at (6.5,1.3) {$v_2$};
\draw[fill] (8,2) circle (4pt);
\node at (8,2.3) {$v_3$};
\draw[fill] (8,0) circle (4pt);
\node at (8,-.4) {$v_4$};
\draw[line width=1.5] (5,1) -- (6.5,1) -- (8,2) -- (8,0) -- (6.5,1);

\draw[line width=1.5] (10,.6)--(10,1.4)--(10.8,1.4)--(10.8,.6)--(10,.6);

\draw[shift={(1.2,0)},line width=1.5] (10,.6)--(10,1.4)--(10.8,1.4)--(10.8,.6)--(10,.6);

\draw[shift={(2.4,.65)},line width=1.5] (10,.6)--(10,1.4)--(10.8,1.4)--(10.8,.6)--(10,.6);

\draw[shift={(2.4,-.65)},line width=1.5] (10,.6)--(10,1.4)--(10.8,1.4)--(10.8,.6)--(10,.6);

\node at (10.4,1) {$v_1$};
\node at (11.6,1) {$v_2$};
\node at (12.8,1.65) {$v_3$};
\node at (12.8,.35) {$v_4$};

\draw[dashed, line width=1.5] (10.8,1) -- (11.2,1);
\draw[dashed, line width=1.5] (12,1.3) -- (12.4,1.3);
\draw[dashed, line width=1.5] (12,.7) -- (12.4,.7);
\draw[dashed, line width=1.5] (12.8,1.25) -- (12.8,.75);

\end{tikzpicture}
\caption{A unit bar visibility representation and a unit rectangle visibility representation of a graph.}\label{fig:barvis}
\end{figure}
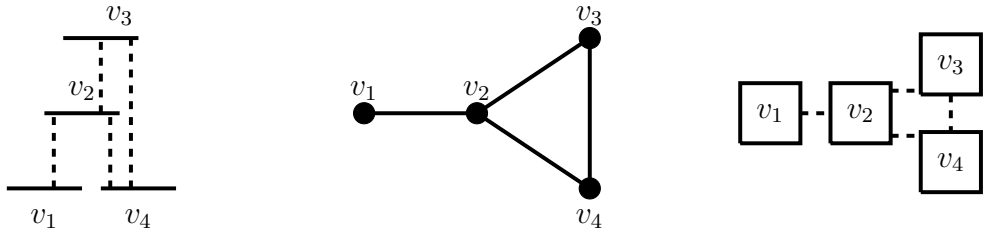

There is an important distinction between the models of unit bar and unit rectangle visibility graphs: in a unit bar visibility graph, the lines of sight are orthogonal to the affine spaces defined by the objects; in a unit rectangle visibility graph, the lines of sight are orthogonal to faces of the rectangles, but live in the same ambient space as the objects.
There is also a clear connection between unit bar and unit rectangle visibility representations: the horizontal and vertical lines of sight in a unit rectangle visibility representation correspond directly to unit bar visibility representations.

To elucidate this distinction, we define two versions of visibility for unit hypercube visibility graphs.
Throughout the paper, we use {\it $n$-cube} to mean an $n$-dimensional unit hypercube.
A graph $G$ has an {\it $n$-cube visibility representation} if the vertices of $G$ can be assigned to disjoint axis-aligned unit $n$-cubes in $\RR^{n}$ such that two vertices are adjacent if and only if there is an uninterrupted axis-aligned cylindrical channel of positive diameter between their respective $n$-cubes.
A graph with an $n$-cube visibility representation is an {\it $n$-cube visibility graph}. 
In this setting, a unit rectangle visibility graph is a $2$-cube visibility graph.
A graph $G$ has an {\it $n$-cube orthogonal visibility representation} if the vertices of $G$ can be assigned to disjoint unit $n$-cubes in $\RR^{n+1}$ such that the cubes are aligned with the first $n$ axes, and two vertices are adjacent if and only if there is an uninterrupted cylindrical channel of positive diameter that is parallel to $\ee_{n+1}$ between their respective $n$-cubes.
A graph with an $n$-cube orthogonal visibility representation is an {\it $n$-cube orthogonal visibility graph}. 
In this setting, a unit bar visibility graph is a $1$-cube orthogonal visibility graph.
See Figure~\ref{fig:orthandreg} for examples of a $2$-cube visibility representation and a $2$-cube orthogonal visibility representation.

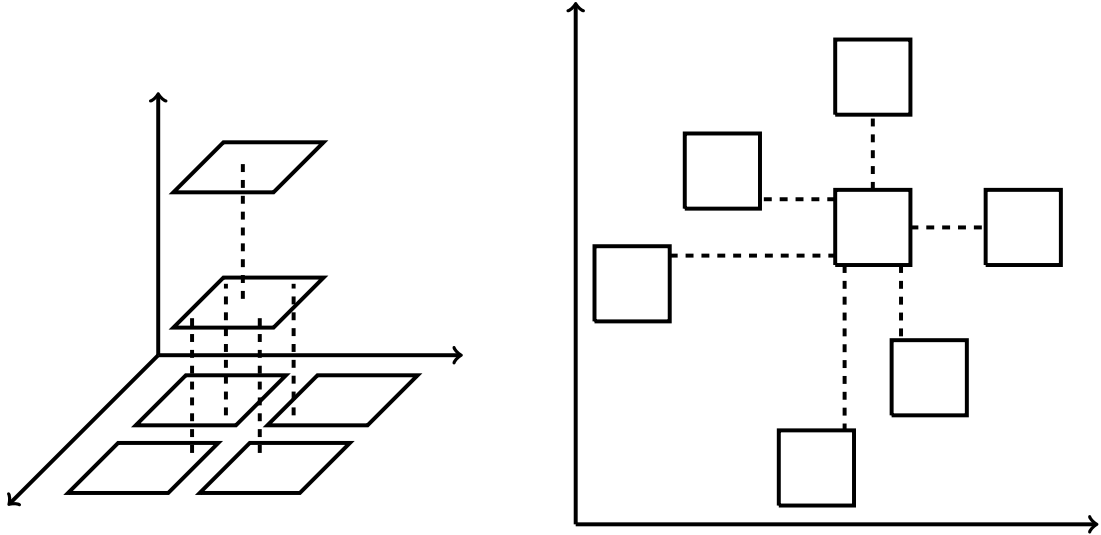
\begin{figure}
\centering
\begin{tikzpicture}

\node[trapezium, draw, trapezium left angle=45, trapezium right angle=135, minimum width = 2cm, line width = 1.5] at (0,0) {};

\node[trapezium, draw, trapezium left angle=45, trapezium right angle=135, minimum width = 2cm, line width = 1.5] at (1.75,0) {};

\node[trapezium, draw, trapezium left angle=45, trapezium right angle=135, minimum width = 2cm, line width = 1.5] at (.9,.9) {};

\node[trapezium, draw, trapezium left angle=45, trapezium right angle=135, minimum width = 2cm, line width = 1.5] at (2.65,.9) {};

\node[trapezium, draw, trapezium left angle=45, trapezium right angle=135, minimum width = 2cm, line width = 1.5] at (1.4,2.2) {};

\node[trapezium, draw, trapezium left angle=45, trapezium right angle=135, minimum width = 2cm, line width = 1.5] at (1.4,4) {};

\draw[dashed, line width=1.5] (.65,.2) -- (.65,2);
\draw[dashed, line width=1.5] (1.55,.2) -- (1.55,2);
\draw[dashed, line width=1.5] (1.1,.7) -- (1.1,2.45);
\draw[dashed, line width=1.5] (2,.7) -- (2,2.45);
\draw[dashed, line width=1.5] (1.325,2.25) -- (1.325,4.05);

\draw[->, line width=1.5] (.2,1.5) -- (4.25,1.5);
\draw[->, line width=1.5] (.2,1.5) -- (-1.8,-.5);
\draw[->, line width=1.5] (.2,1.5) -- (.2,5);

\draw [shift={(1.2,1.2)},line width=1.5] (8,1.5)--(9,1.5)--(9,2.5)--(8,2.5)--(8,1.5);
\draw [shift={(3.2,1.2)}, line width=1.5] (8,1.5)--(9,1.5)--(9,2.5)--(8,2.5)--(8,1.5);
\draw [shift={(-.8,1.95)}, line width=1.5] (8,1.5)--(9,1.5)--(9,2.5)--(8,2.5)--(8,1.5);
\draw [shift={(-2,.45)}, line width=1.5] (8,1.5)--(9,1.5)--(9,2.5)--(8,2.5)--(8,1.5);
\draw [shift={(1.2,3.2)}, line width=1.5] (8,1.5)--(9,1.5)--(9,2.5)--(8,2.5)--(8,1.5);
\draw [shift={(.45,-2)}, line width=1.5] (8,1.5)--(9,1.5)--(9,2.5)--(8,2.5)--(8,1.5);
\draw [shift={(1.95,-.8)}, line width=1.5] (8,1.5)--(9,1.5)--(9,2.5)--(8,2.5)--(8,1.5);

\draw[dashed, line width=1.5] (9.325,2.7) -- (9.325,.5);
\draw[dashed, line width=1.5] (10.075,2.7) -- (10.075,1.7);
\draw[dashed, line width=1.5] (9.7,3.7) -- (9.7,4.7);
\draw[dashed, line width=1.5] (9.2,3.575) -- (8.2,3.575);
\draw[dashed, line width=1.5] (7,2.825) -- (9.2,2.825);
\draw[dashed, line width=1.5] (10.2,3.2) -- (11.2,3.2);

\draw[->, line width=1.5] (5.75,-.75) -- (12.7,-.75);
\draw[->, line width=1.5] (5.75,-.75) -- (5.75,6.2);

\end{tikzpicture}

\caption{A unit $2$-cube orthogonal visibility representation of $K_{1,5}$ and a unit $2$-cube visibility representation of $K_{1,6}$.}\label{fig:orthandreg}
\end{figure}

The family of graphs that have a specified type of visibility representation is generally quite limited.
For instance, when objects are embedded in the plane and the directions of lines of sight are limited, there usually is a bound on the thickness of such a graph.
To address such limitations, one can assign vertices to more than one object in a visibility representation.
This approach was first taken by Chang, Hutchinson, Jacobson, Lehel, and West~\cite{BarVisNum} in the context of bar visibility representations.
The {\it bar visibility number} of a graph $G$ is the minimum $t$ such that $G$ has a bar orthogonal visibility representation in which each vertex is assigned to at most $t$ bars.
Bar visibility numbers were further studied by Axenovich, Beveridge, Hutchinson, and West~\cite{VisDirect} for directed graphs (edges are oriented towards whichever bar has a larger $y$-coordinate), and also by Gaub, Rose, and the second author~\cite{UnitBarVisNum} when all bars have unit length.

In this paper, we study visibility numbers of trees when vertices are assigned to sets of unit hypercubes.
The {\it $n$-cube visibility number} of a graph $G$, denoted $h^{(n)}(G)$ is the minimum $t$ such that there is a $n$-cube visibility representation of $G$ in which each vertex is assigned to at most $t$ unit $n$-cubes in $\RR^n$.
The {\it $n$-cube orthogonal visibility number} of a graph $G$, denoted $h^{(n)\perp}(G)$ is the minimum $t$ such that there is an $n$-cube orthogonal visibility representation of $G$ in which each vertex is assigned to at most $t$ $n$-cubes in $\RR^{n+1}$ (recall that these cubes are orthogonal to $\ee_{n+1}$).
See Figures~\ref{fig:multicube1} and~\ref{fig:multicube2} for examples.

In all $n$-cube visibility representations and $n$-cube orthogonal visibility representations we will add the additional requirement that no two cubes that are assigned to the same vertex can see each other.
Such lines of sight would allow us to use multiple $n$-cubes to act like an $n$-dimensional box with non-unit dimensions.

\begin{figure}
\centering
\begin{tikzpicture}

\draw [shift={(1.2,1.2)},line width=1.5] (8,1.5)--(9,1.5)--(9,2.5)--(8,2.5)--(8,1.5);
\draw [shift={(3.2,1.2)}, line width=1.5] (8,1.5)--(9,1.5)--(9,2.5)--(8,2.5)--(8,1.5);
\draw [shift={(-.8,1.95)}, line width=1.5] (8,1.5)--(9,1.5)--(9,2.5)--(8,2.5)--(8,1.5);
\draw [shift={(-2,.45)}, line width=1.5] (8,1.5)--(9,1.5)--(9,2.5)--(8,2.5)--(8,1.5);
\draw [shift={(1.2,3.2)}, line width=1.5] (8,1.5)--(9,1.5)--(9,2.5)--(8,2.5)--(8,1.5);
\draw [shift={(.45,-2)}, line width=1.5] (8,1.5)--(9,1.5)--(9,2.5)--(8,2.5)--(8,1.5);
\draw [shift={(1.95,-.8)}, line width=1.5] (8,1.5)--(9,1.5)--(9,2.5)--(8,2.5)--(8,1.5);

\draw[dashed, line width=1.5] (9.325,2.7) -- (9.325,.5);
\draw[dashed, line width=1.5] (10.075,2.7) -- (10.075,1.7);
\draw[dashed, line width=1.5] (9.7,3.7) -- (9.7,4.7);
\draw[dashed, line width=1.5] (9.2,3.575) -- (8.2,3.575);
\draw[dashed, line width=1.5] (7,2.825) -- (9.2,2.825);
\draw[dashed, line width=1.5] (10.2,3.2) -- (11.2,3.2);

\node[shift={(1.2,1.2)}] at (8.5,2) {$x$};
\node[shift={(1.2,1.2)}] at (10.5,2) {$v_6$};
\node[shift={(0,1.2)}] at (6.5,1.25) {$v_3$};
\node[shift={(1.2,1.2)}] at (6.5,2.75) {$v_2$};
\node[shift={(1.2,1.2)}] at (8.5,4) {$v_1$};
\node[shift={(1.2,0)}] at (7.75,0) {$v_4$};
\node[shift={(1.2,1.2)}] at (9.25,0) {$v_5$};


\draw [shift={(6.7,6.7)}, line width=1.5] (8,1.5)--(9,1.5)--(9,2.5)--(8,2.5)--(8,1.5);
\draw [shift={(4.7,6.7)}, line width=1.5] (8,1.5)--(9,1.5)--(9,2.5)--(8,2.5)--(8,1.5);
\draw [shift={(6.7,4.7)}, line width=1.5] (8,1.5)--(9,1.5)--(9,2.5)--(8,2.5)--(8,1.5);

\node at (15.2,8.7) {$x$};
\node at (15.2,6.7) {$v_8$};
\node at (13.2,8.7) {$v_7$};

\draw[shift={(6.7,6.7)}, dashed, line width=1.5] (8.5,1.5) -- (8.5,.5);
\draw[shift={(6.7,6.7)}, dashed, line width=1.5] (8,2) -- (7,2);
\end{tikzpicture}

\caption{A $2$-cube visibility representation of $K_{1,8}$ using at most two cubes per vertex.
The representation has two components.}\label{fig:multicube1}

\end{figure}
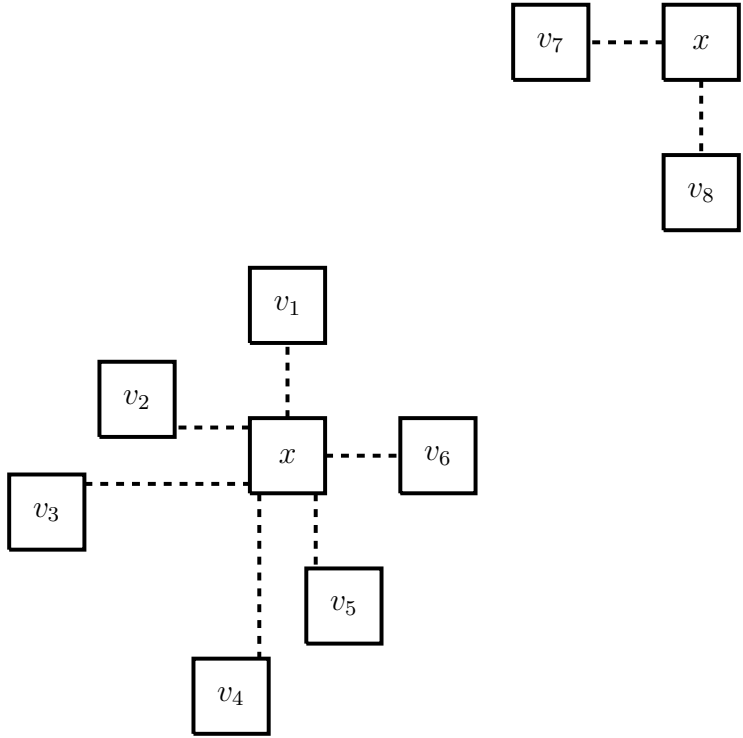

\begin{figure}
\centering
\begin{tikzpicture}
\node[trapezium, draw, trapezium left angle=45, trapezium right angle=135, minimum width = 2cm, line width = 1.5] at (0,0) {$v_1$};

\node[trapezium, draw, trapezium left angle=45, trapezium right angle=135, minimum width = 2cm, line width = 1.5] at (1.75,0) {$v_2$};

\node[trapezium, draw, trapezium left angle=45, trapezium right angle=135, minimum width = 2cm, line width = 1.5] at (.9,.9) {$v_3$};

\node[trapezium, draw, trapezium left angle=45, trapezium right angle=135, minimum width = 2cm, line width = 1.5] at (2.65,.9) {$v_4$};

\node[trapezium, draw, trapezium left angle=45, trapezium right angle=135, minimum width = 2cm, line width = 1.5] at (1.4,2.2) {$x$};

\node[trapezium, draw, trapezium left angle=45, trapezium right angle=135, minimum width = 2cm, line width = 1.5] at (1.4,4) {$v_5$};

\draw[dashed, line width=1.5] (.65,.2) -- (.65,2);
\draw[dashed, line width=1.5] (1.55,.2) -- (1.55,2);
\draw[dashed, line width=1.5] (1.1,.7) -- (1.1,2.45);
\draw[dashed, line width=1.5] (2,.7) -- (2,2.45);
\draw[dashed, line width=1.5] (1.325,2.4) -- (1.325,3.9);

\node[shift={(5,0)}, trapezium, draw, trapezium left angle=45, trapezium right angle=135, minimum width = 2cm, line width = 1.5] at (0,0) {$v_6$};

\node[shift={(5,0)}, trapezium, draw, trapezium left angle=45, trapezium right angle=135, minimum width = 2cm, line width = 1.5] at (1.75,0) {$v_7$};

\node[shift={(5,0)}, trapezium, draw, trapezium left angle=45, trapezium right angle=135, minimum width = 2cm, line width = 1.5] at (.9,.9) {$v_8$};

\node[shift={(5,0)}, trapezium, draw, trapezium left angle=45, trapezium right angle=135, minimum width = 2cm, line width = 1.5] at (2.65,.9) {$v_9$};

\node[shift={(5,0)}, trapezium, draw, trapezium left angle=45, trapezium right angle=135, minimum width = 2cm, line width = 1.5] at (1.4,2.2) {$x$};

\node[shift={(5,0)}, trapezium, draw, trapezium left angle=45, trapezium right angle=135, minimum width = 2cm, line width = 1.5] at (1.4,4) {$v_{10}$};

\draw[shift={(5,0)}, dashed, line width=1.5] (.65,.2) -- (.65,2);
\draw[shift={(5,0)}, dashed, line width=1.5] (1.55,.2) -- (1.55,2);
\draw[shift={(5,0)}, dashed, line width=1.5] (1.1,.7) -- (1.1,2.45);
\draw[shift={(5,0)}, dashed, line width=1.5] (2,.7) -- (2,2.45);
\draw[shift={(5,0)}, dashed, line width=1.5] (1.325,2.4) -- (1.325,3.9);

\end{tikzpicture}
\caption{A $2$-cube orthogonal visibility representation of $K_{1,10}$ using at most two cubes per vertex.
The representation has two components.}\label{fig:multicube2}
\end{figure}
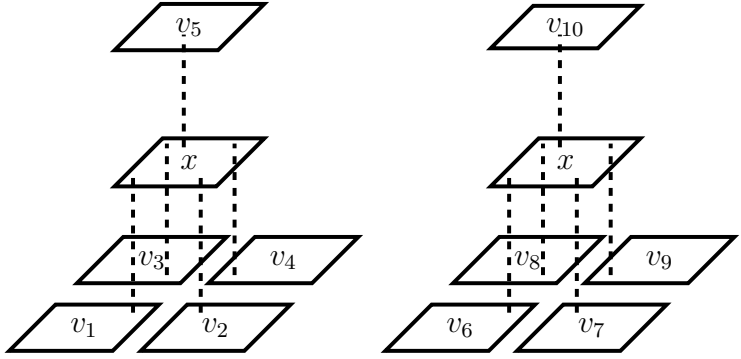

It is clear that every tree has bar visibility number $1$ (here the bars can have different length).
In~\cite{UnitBarVisNum}, Gaub et al.\ presented a linear-time algorithm that determines the $1$-cube orthogonal visibility number of any tree.
In this paper, we study the relation between $n$-cube visibility numbers and $(n-1)$-cube orthogonal visibility numbers of trees, and also their connections to related arboricity parameters.
We also characterize trees with $n$-cube orthogonal visibility representations in terms of trees that have representations as intersection graphs of $n$-cubes in $\RR^n$.

\section{Main results}

A tree is an {\it $n$-cube (orthogonal) visibility tree} if it is an $n$-cube (orthogonal) visibility graph.
A graph is an {\it $n$-cube (orthogonal) visibility forest} if it is the disjoint union of $n$-cube (orthogonal) visibility trees.
Let $\Upsilon_{h^{(n)}}(G)$ denote the minimum $k$ such that $G$ is the union of $k$ unit $n$-cube visibility forests.
Let $\Upsilon_{h^{(n)\perp}}(G)$ denote the minimum $k$ such that $G$ is the union of $k$ unit $n$-cube orthogonal visibility forests.
We call $\Upsilon_{h^{(n)}}(G)$  and $\Upsilon_{h^{(n)\perp}}(G)$ the {\it $n$-cube arboricity} and the {\it $n$-cube orthogonal arboricity} of $G$, respectively.

We begin by showing that the $n$-cube (orthogonal) visibility number of a tree $T$ is equal to the $n$-cube (orthogonal) arboricity of $T$.
Since the same proof works for both standard and orthogonal visibility models, we use $h^{(n)*}(T)$ and $\Upsilon_{{h}^{(n)*}}(T)$ to indicate the visibility number and arboricity, where the visibility may be standard or orthogonal.

Let $R$ be an $n$-cube (orthogonal) visibility representation of a graph $G$.
A \textit{component} of $R$ is a maximal set of $n$-cubes that are connected by the visibility relation.
An important aspect of the visibility models that we study is that it is possible for a visibility representation to have multiple components.
This is not true in some visibility models, notably the point visibility graphs studied in~\cite{Point}.
Let $G$ and $G'$ be graphs with visibility representations $R$ and $R'$ respectively ($R$ and $R'$ are assumed to be representations of the same type).
We can obtain a visibility representation of $G\cup G'$ by adding the cubes of $R'$ to $R$, and translating the cubes of $R'$ all by the same vector so that no cube from $R$ sees a cube of $R'$; we call the resulting representation a {\it disjoint union} of $R$ and $R'$.
The representations in Figures~\ref{fig:multicube1} and~\ref{fig:multicube2} can both be thought of as the disjoint union of two components.

We require the following technical lemma, a version of which appeared for $n=1$ as Lemma 1 in~\cite{UnitBarVisNum}.
Since the proof is essentially identical to the proof in~\cite{UnitBarVisNum}, we omit it here.

\begin{lemma}\label{lem:coords}
If $G$ is a unit $n$-cube orthogonal visibility graph, then there is a unit $n$-cube orthogonal visibility representation of $G$ in which all $n$-cubes have distinct coordinates in the direction of $\ee_{n+1}$.
\end{lemma}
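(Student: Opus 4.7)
The plan is to begin with an arbitrary unit $n$-cube orthogonal visibility representation $R$ of $G$ and perturb the $\ee_{n+1}$-coordinates of the cubes by sufficiently small amounts so that all such coordinates become distinct, while preserving both the visibility graph and the disjointness of the cubes. Since the cubes are aligned with the first $n$ axes, each cube lies at a single $\ee_{n+1}$-level, so a perturbation is simply a change of that level value for each cube.

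Enumerate the cubes of $R$ as $C_1, \ldots, C_N$ with $\ee_{n+1}$-coordinates $c_1 \le \cdots \le c_N$, breaking ties arbitrarily. Let $\delta = \min\{c_{j+1} - c_j : c_{j+1} > c_j\}$ (taking $\delta = 1$ if all $c_i$ are equal), fix a positive $\epsilon < \delta/N$, and redefine the $\ee_{n+1}$-coordinate of $C_i$ to be $c_i' = c_i + i\epsilon$. The values $c_1', \ldots, c_N'$ are then distinct, which is the desired conclusion. Disjointness of the new cubes is immediate: cubes originally at distinct heights remain at distinct heights, while any two cubes that shared a level originally must have had disjoint projections onto the first $n$ coordinates (since they were disjoint subsets of $\RR^{n+1}$), so after perturbation they lie at different heights with those same disjoint projections.

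The substance of the argument is verifying that visibility is unchanged. Writing $\pi$ for the projection onto the first $n$ coordinates, $u$ sees $v$ in either representation (with $c_u < c_v$) if and only if $\pi(u) \cap \pi(v)$ contains a full-dimensional open box not covered by any $\pi(w)$ with $c_u < c_w < c_v$. The projections themselves, and hence the overlap $\pi(u) \cap \pi(v)$, do not depend on $\ee_{n+1}$-coordinates. The choice $\epsilon < \delta/N$ ensures that the strict order of any two originally-distinct heights is preserved, so the only cubes $w$ that can newly lie strictly between $u$ and $v$ after perturbation are those originally at level $c_u$ or $c_v$; by the disjointness observation above, such a $w$ satisfies $\pi(w) \cap \pi(u) = \emptyset$ or $\pi(w) \cap \pi(v) = \emptyset$, so $\pi(w)$ does not meet $\pi(u) \cap \pi(v)$ and cannot obstruct. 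For $u$ and $v$ originally at the same level, they are non-adjacent in $R$, and the same disjointness argument shows $\pi(u) \cap \pi(v)$ has no full-dimensional interior, so no visibility between them can arise in the perturbed representation either.

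The main obstacle is precisely this last point: ruling out spurious visibilities or obstructions created by promoting ``same-level'' cubes into the interior of a vertical interval. It is handled by the key observation that disjointness of two $n$-cubes at a common $\ee_{n+1}$-level forces their first-$n$ projections to be disjoint. Everything else reduces to a routine choice of $\epsilon$, and the argument is a direct generalization of the $n = 1$ proof.
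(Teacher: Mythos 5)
Your perturbation argument is correct and complete: the key point --- that two disjoint $n$-cubes at a common $\ee_{n+1}$-level have disjoint projections onto the first $n$ coordinates, so promoting one of them strictly between a visible pair cannot obstruct (or create) a channel --- is exactly what is needed, and your choice of $\epsilon$ and the sorted indexing do preserve the strict order of originally distinct heights. The paper itself omits the proof, deferring to the $n=1$ case in Gaub, Rose, and Wenger, which uses essentially this same small-perturbation approach, so your write-up supplies the intended argument rather than a genuinely different one.
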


For ease of discussion, we will refer to the $(n+1)$st coordinate of an $n$-cube in an $n$-cube orthogonal visibility representation as its {\it height}.

\begin{theorem}\label{vis=arb}
If $T$ is a tree, then $\Upsilon_{{h}^{(n)*}}(T) = h^{(n)*}(T)$.
\end{theorem}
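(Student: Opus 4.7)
The two inequalities $h^{(n)*}(T) \le \Upsilon_{h^{(n)*}}(T)$ and $\Upsilon_{h^{(n)*}}(T) \le h^{(n)*}(T)$ are handled separately. For the first, write $T = F_1 \cup \cdots \cup F_k$ with $k = \Upsilon_{h^{(n)*}}(T)$, where each $F_i$ is a unit $n$-cube (orthogonal) visibility forest, and take a visibility representation $R_i$ of each $F_i$ in which every vertex gets exactly one cube. Form the disjoint union of the $R_i$ by translating each so that no cube of any $R_i$ sees a cube of any $R_j$ with $j \neq i$. The result represents $T$: each edge of $T$ lies in some $F_i$ and is realized in $R_i$, while no spurious visibilities arise between representations. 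Every vertex has at most $k$ cubes, one per $R_i$ containing it, and any two of these lie in different components and hence do not see each other, giving $h^{(n)*}(T) \le k$.

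For the reverse inequality, let $R$ be a representation of $T$ in which each vertex has at most $t := h^{(n)*}(T)$ cubes. The plan is to assign each cube of $R$ a label in $\{1, \ldots, t\}$ so that (i) cubes of a common vertex receive distinct labels, and (ii) for every edge $uv \in E(T)$ some witness pair $(C_u, C_v)$ realizing the visibility $u \sim v$ in $R$ has matching labels. Root $T$ at an arbitrary vertex $r$ and process the vertices in breadth-first order. Label the cubes of $r$ by an arbitrary injection into $\{1, \ldots, t\}$. When processing a non-root vertex $v$ with parent $u$, the edge $uv$ is realized by some pair of cubes $(C_u, C_v)$ that see each other in $R$; set the label of $C_v$ equal to the label already assigned to $C_u$, and distribute the remaining labels among the at most $t-1$ other cubes of $v$ injectively.

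Given such a labeling, let $G_i$ be the sub-representation consisting of cubes labeled $i$ and let $F_i$ be its visibility graph. By label distinctness $G_i$ has at most one cube per vertex, so $F_i$ is a simple graph on a subset of $V(T)$; since every visibility in $R$ corresponds to an edge of $T$, $F_i$ is a subgraph of $T$ and therefore a forest; and $G_i$ inherits the disjointness of cubes and the no-same-vertex-visibility condition from $R$, so it realizes $F_i$ as a unit $n$-cube (orthogonal) visibility forest. Every edge of $T$ is covered by its chosen label during the BFS pass, so $T = F_1 \cup \cdots \cup F_t$ and hence $\Upsilon_{h^{(n)*}}(T) \le t$. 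The main subtlety is ensuring that the labeling procedure never fails: this relies crucially on the tree structure, so that each non-root vertex is constrained by exactly one already-processed edge (to its parent), leaving enough freedom to label its remaining cubes, while acyclicity precludes any later edge from imposing an incompatible constraint. The argument is uniform across the two visibility models, since only the combinatorial structure of $R$, not its geometric particulars, is used.
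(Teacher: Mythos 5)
Your first inequality, $h^{(n)*}(T)\le \Upsilon_{h^{(n)*}}(T)$ via a disjoint union of representations of the forests, is correct and is exactly the paper's argument.

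The reverse direction has a genuine gap, and it is not the one you flag. You define $G_i$ to be the set of cubes labeled $i$ and let $F_i$ be ``its visibility graph,'' then assert that since every visibility in $R$ corresponds to an edge of $T$, $F_i$ is a subgraph of $T$ and $G_i$ realizes it. But visibility is not hereditary under deletion of cubes: removing the cubes with labels other than $i$ can \emph{create} lines of sight between cubes of $G_i$ that were blocked in $R$ by cubes carrying other labels. Such a new channel may join cubes of nonadjacent vertices of $T$ (so $F_i$ is not a subgraph of $T$), may create cycles (so $F_i$ is not a forest), and may even join two cubes of the same vertex. So $G_i$ need not be a visibility representation of anything useful, and the decomposition $T=F_1\cup\cdots\cup F_t$ does not follow. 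This unblocking phenomenon is precisely the main difficulty, and your closing remark that ``only the combinatorial structure of $R$, not its geometric particulars, is used'' is exactly where the argument breaks: the geometric particulars are essential. The paper handles this by a different device: it picks, for each edge realized in an offending component, a line of sight of minimum Euclidean length, splits that component along the resulting spanning forest, and then shows that any spurious visibility created by the split would force a walk $ac_1\cdots c_kb$ in $T$ whose total sight length contradicts either the minimality of the chosen lines of sight, the uniqueness of the $a$--$b$ path in $T$, or the assumptions that same-vertex cubes cannot see each other (standard model) or that all heights are distinct (orthogonal model, via Lemma~\ref{lem:coords}). Your BFS labeling is a clean combinatorial idea, but without an argument of this kind it does not yield valid subrepresentations; also note that the paper's target is a representation that is a disjoint union of tree representations, not merely a partition of the cubes in place.
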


\begin{proof} If $\Upsilon_{{h}^{(n)*}}(T) = k$, then there exists a decomposition of $T$ into $k$ $n$-cube (orthogonal) visibility forests.
A disjoint union of the $n$-cube (orthogonal) visibility representations of each of these forests is an $n$-cube (orthogonal) visibility representation of $T$ in which each vertex is assigned to at most $k$ cubes, so $h^{(n)*}(T)\leq \Upsilon_{{h}^{(n)*}}(T)$.

Let $h^{(n)*}(T) = t$ and let $R$ be an $n$-cube (orthogonal) visibility representation of $T$ in which each vertex is assigned to at most $t$ cubes.
If $R$ is an $n$-cube orthogonal visibility representation, then assume by Lemma~\ref{lem:coords} that all $n$-cubes have distinct heights.
Furthermore, assume that $R$ is chosen so that it contains the minimum number of pairs of $n$-cubes that correspond to the same vertex and lie in the same component of $R$.

Assume that there exists a $v \in V(T)$ such that two $n$-cubes corresponding to $v$ lie in the same component of $R$; call the component $R_1$.
For each edge $xy$ represented as a line of sight in $R_1$, choose a line of sight between cubes for $x$ and $y$ in $R_1$ of minimum Euclidean length, breaking ties arbitrarily.
The chosen lines of sight create a spanning forest $F$ of the $n$-cubes in $R_1$.
Because $T$ is a tree, there is at most one cube assigned to each vertex in each component of $F$.
Let $R'$ be a representation obtained by taking the disjoint union of $R-R_1$ and each of the visibility representations of the components of $F$.
Observe that $R'$ retains all edges represented in $R$ and contains fewer pairs of hypercubes that are assigned to the same vertex and lie in the same component.

It remains to show that $R'$ does not have any lines of sight that correspond to edges that are not in $T$.
Suppose that the $n$-cubes $B(a)$ and $B(b)$ correspond to vertices $a$ and $b$ that are not adjacent in $T$, but $B(a)$ and $B(b)$ see each other in $R'$; clearly $B(a)$ and $B(b)$ lie in $R_1$ in the representation $R$.
Thus, there is a channel of visibility between $B(a)$ and $B(b)$ in $R'$ that does not exist in $R_1$.
Therefore, there is a collection of $n$-cubes in $R_1$ that blocks the channel of visibility between $B(a)$ and $B(b)$, and these blocking cubes are in a different component of the spanning forest of $R_1$.
Label the blocking cubes $B(c_1),B(c_2),...,B(c_k)$ where $B(a)$ sees $B(c_1)$, $B(c_i)$ sees $B(c_{i+1})$ for $i\in[k-1]$, and $B(c_k)$ sees $B(b)$ (see Figure~\ref{blocker}).

Note that $ac_1\ldots c_kb$ is a walk in $T$ between $a$ and $b$; call the walk $W$.
Let the length of the channel between $B(a)$ and $B(b)$ be $d$.
The sum of the Euclidean lengths of the lines of sight corresponding to edges in $W$ is $d-k$ if $R$ is an $n$-cube visibility representation and $d$ if $R$ is an $n$-cube orthogonal visibility representation.

\begin{figure}
\begin{center}
\begin{tikzpicture}
\draw (0,0) -- (0,1) -- (1,1) -- (1,0) -- cycle;
\draw (1.5,0.1) -- (1.5,1.1) -- (2.5,1.1) -- (2.5,0.1) -- cycle;
\draw (3,0.5) -- (3,1.5) -- (4,1.5) -- (4,0.5) -- cycle;
\draw (3.2,0.3) -- (3.2,-0.7) -- (4.2,-0.7) -- (4.2,0.3) -- cycle;
\draw (4.4,0.1) -- (4.4,1.1) -- (5.4,1.1) -- (5.4,0.1) -- cycle;
\draw (7.6,1.1) -- (7.6,0.1) -- (8.6,0.1) -- (8.6,1.1) -- cycle;
\draw (9,0.1) -- (9,1.1) -- (10,1.1) -- (10,0.1) -- cycle;

\node[scale=0.8] at (0.5,0.5) {$B(a)$};
\node[scale=0.8] at (9.5,0.6) {$B(b)$};
\node[scale=0.8] at (2,0.6) {$B(c_1)$};
\node[scale=0.8] at (3.5,1) {$B(c_2)$};
\node[scale=0.8] at (4.9,0.6) {$B(c_3)$};
\node[scale=0.8] at (8.1,0.6) {$B(c_k)$};
\node[scale=0.8] at (6.5,0.5) {...};

\draw[dashed] (1,0.7) -- (1.5,0.7);
\draw[dashed] (2.5,0.7) -- (3,0.7);
\draw[dashed] (4,0.7) -- (4.4,0.7);
\draw[dashed] (5.4,0.7) -- (6,0.7);
\draw[dashed] (7,0.7) -- (7.6,0.7);
\draw[dashed] (8.6,0.7) -- (9,0.7);

\end{tikzpicture}
\end{center}

\caption{A $2$-cube visibility representation with cubes $B(c_1)$,...,$B(c_k)$ blocking $B(a)$ from $B(b)$.
The blocking cubes are in a different component from $B(a)$ and $B(b)$ when $R_1$ is partitioned.} \label{blocker}
\end{figure}
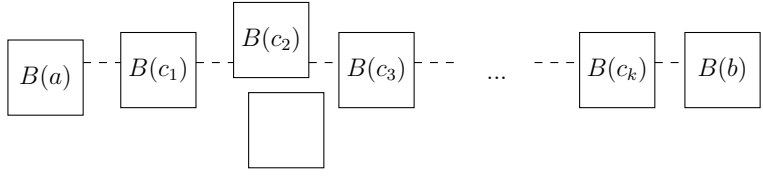

Let $P$ be the unique path in $T$ with endpoints $a$ and $b$, and let $P$ have length $m+1$.
Thus, $m\le k$.
Since $P$ is contained in the component of $B(a)$ and $B(b)$, the sum of the Euclidean lengths of the lines of sight corresponding to edges in $P$ is at least $d-m$ if $R$ is an $n$-cube visibility representation and is at least $d$ if $R$ is an $n$-cube orthogonal visibility representation.
Note that the $m+1$ edges in $P$ are also represented in $W$.
If $m<k$, then there is an edge in $P$ with a line of sight in $W$ that is shorter than its line of sight in $P$, contradicting the assumption that the shortest line of sight for each edge has been chosen.
Therefore, $m=k$, and $W=P$.
If $W$ does not contain a line of sight that is shorter than the corresponding line of sight in $P$, then the length of each line of sight in $W$ is equal to the length of the corresponding line of sight in $P$.
Therefore, in $R_1$ there are parallel lines of sight of the same length that join $B(a)$ to two $n$-cubes corresponding to the same vertex in $T$.
If $R$ is an $n$-cube visibility representation, this contradicts the assumption that no $n$-cubes corresponding to the same vertex can see each other.
If $R$ is an $n$-cube orthogonal  visibility representation, this contradicts the assumption that all cubes in $R$ have distinct heights.
We conclude that there are no lines of sight in $R'$ that correspond to edges that are not in $T$.
Thus, $R'$ is a representation of $T$ with fewer pairs of $n$-cubes that are assigned to the same vertex and lie in the same component, contradicting the minimality of $R$.
Therefore, there is a representation of $T$ that has the same number of $n$-cubes as $R$ and is a disjoint union of representations of trees, so $\Upsilon_{h^{(n)*}}(T)\le h^{(n)*}(T)$.
Thus, $h^{(n)*}(T) = \Upsilon_{h^{(n)*}}(T)$. 
\end{proof}

We now characterize $n$-cube visibility trees in terms of $(n-1)$-cube orthogonal visibility trees.
This result is a straightforward generalization of Theorem 4.5 from \cite{UnitRec}, which corresponds to the case when $n=2$.
The proof of Theorem 4.5 from~\cite{UnitRec} uses the structural characterization of $1$-cube orthogonal visibility trees due to Dean and Veytsel~\cite{UnitBar} to build $2$-cube visibility representations of trees.
Our proof does not require a characterization of unit $(n-1)$-cube orthogonal visibility trees, but does depend on unit $(n-1)$-cube orthogonal visibility representations of trees that are assumed to be unit $(n-1)$-cube orthogonal visibility trees.
In this sense, our proof is not purely constructive as is the proof of Theorem 4.5 from~\cite{UnitRec}.

We use the immediate corollary of Lemma~\ref{lem:coords} that in a unit $(n-1)$-cube orthogonal visibility representation of a graph, the heights of the cubes can be made to differ by an arbitrarily large amount.
We will also adopt the convention that the location of a cube will be given by the coordinates of the point at the center of cube (recall that the side lengths of all cubes are $1$).

\begin{theorem}\label{TreetoTree}
A tree $T$ is an $n$-cube visibility graph if and only if it is the union of $n$ $(n-1)$-cube orthogonal visibility forests.
\end{theorem}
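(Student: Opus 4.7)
The plan is to prove the biconditional directly.

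For the forward direction, given an $n$-cube visibility representation $R$ of $T$, I partition the edges by the direction of their lines of sight: for each $i$, set $E_i = \{uv \in E(T) : \text{the line of sight for } uv \text{ is parallel to } \ee_i\}$ and $T_i = (V(T), E_i)$. Each $T_i$ is a subgraph of the tree $T$, hence a forest. For each component $S$ of $T_i$, I form an $(n-1)$-cube orthogonal visibility representation of $S$ by orthogonally projecting the $n$-cubes for $V(S)$ onto the hyperplane $\ee_i^\perp$ and treating the $\ee_i$-coordinate as the new height. Disjointness is preserved because two $n$-cubes sharing a projection must differ in their $\ee_i$-coordinate. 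Orthogonal visibility in the projected representation matches $E(S)$ because any cube aligned with, and lying between, two cubes of $V(S)$ in the $\ee_i$-direction of $R$ must itself be in $V(S)$: the chain of nearest-aligned-neighbor edges in the $\ee_i$-direction is an $E_i$-path connecting every aligned cube to $S$. A disjoint union over the components of $T_i$ then exhibits $T_i$ as an $(n-1)$-cube orthogonal visibility forest.

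For the backward direction, given $T = F_1 \cup \cdots \cup F_n$ with a representation $R_i$ of each $F_i$, I construct an $n$-cube visibility representation of $T$. Writing the center of $v$'s cube in $R_i$ as $(P_i(v), h_i(v)) \in \RR^{n-1} \times \RR$, I will assign each vertex $v$ to a unit $n$-cube centered at $b(v) \in \RR^n$, with $b^i(v) = h_i(v)$ so that the blocker structure along $\ee_i$ in the $T$-representation reproduces that of $R_i$, and with $b^j(v)$ for $j \ne i$ equal to a prescribed coordinate of $P_i(v)$ under a fixed bijection between $\{1,\dots,n-1\}$ and $\{1,\dots,n\}\setminus\{i\}$. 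Under such an assignment, visibility along $\ee_i$ in the $T$-representation reduces coordinate-by-coordinate to visibility in $R_i$: the $(n-1)$-dimensional projection overlap becomes $P_i$-overlap, and the $\ee_i$-blocker condition becomes the $h_i$-blocker condition.

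The hardest step will be making this definition of $b$ well-defined and consistent across all $i \in [n]$ at once: for each $j$, the value $b^j(v)$ must equal a specified coordinate of $P_i(v)$ for every $i \ne j$, forcing coincidences among positional data in the different $R_i$'s. I plan to use the corollary of Lemma~\ref{lem:coords} to spread the heights of each $R_i$ freely, place different components of each $F_i$ into widely separated height slabs in $R_i$, and send vertices isolated in $F_i$ far from the main structure of $R_i$ so they cannot serve as spurious blockers or create unwanted overlap. Because $T$ is a tree, the system of positional constraints imposed by the $F_i$ is acyclic, and the required representations $R_1,\dots,R_n$ can be selected compatibly. A final verification confirms that every edge $uv \in F_i$ is realized by a line of sight along $\ee_i$ in the $T$-representation and that no non-edge of $T$ is accidentally realized.
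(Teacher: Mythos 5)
Your forward direction is sound and is essentially the paper's: partition the lines of sight by direction, project along $\ee_i$, and check that extracting a single component $S$ of the resulting forest creates no spurious visibilities. Your observation that any blocker lying between two projection-overlapping cubes of $S$ is itself joined to $S$ by a chain of $\ee_i$-visibilities is exactly the right point (the paper instead leans on Theorem~\ref{vis=arb} here), though you should note that an edge may have lines of sight in several directions and so land in several $E_i$; this is harmless for a union.

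The gap is in the backward direction, and it sits exactly where you flagged it. Requiring $b^i(v)=h_i(v)$ and, for each $j\neq i$, $b^j(v)$ equal to the prescribed coordinate of $P_i(v)$, pins each coordinate $b^j(v)$ by $n$ different pieces of data --- one height and $n-1$ position coordinates drawn from distinct representations --- and the position coordinates, unlike the heights, are \emph{not} freely adjustable: within a component of $F_i$ the tuples $P_i(v)$ are rigid up to a single common translation (adjacent cubes must overlap in projection, blockers must sit over the right places). So ``spread the heights freely'' cannot by itself reconcile the constraints; you must show that the rigid position clusters coming from the $n$ different forests can all be translated into agreement, and the sentence ``the system of positional constraints is acyclic, so the $R_i$ can be selected compatibly'' asserts precisely the content of the theorem without proving it. The paper's resolution is an explicit sequential construction: label $V(T)$ by a breadth-first search, name each component of each $F_j$ by its lowest-indexed vertex, and process components in lexicographic order. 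Two facts make this work: distinct components of $F_j$ and $F_{j'}$ share at most one vertex (otherwise, being subtrees of the tree $T$, they would share an edge), and in a BFS labeling the largest label on the $T$-path between $v_i$ and $v_{i'}$ is $\max(i,i')$; together these guarantee that when a component is processed, exactly one of its vertices already carries a cube, so the component's entire representation can be translated to anchor at that cube with no further coincidences required. You would also need to make your separation scheme quantitative (the paper uses gaps of at least $4t$ in the visibility direction of each component, against drifts of at most $t$ in the remaining coordinates) to rule out spurious lines of sight between cubes placed for different components. Supplying this ordering argument and these estimates would turn your plan into the paper's proof.
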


\begin{proof}
Let $R$ be an $n$-cube visibility representation of $T$.
By Theorem~\ref{vis=arb}, we may assume that all components of $R$ are trees.
For each $i\in [n]$, the lines of sight in $R$ that are parallel to $\ee_i$ correspond to a unit $(n-1)$-cube orthogonal visibility representation of a subforest of $T$.
The union of these $n$ forests is $T$.

Now assume that $T$ is a $t$-vertex tree that is the union of $n$ $(n-1)$-cube orthogonal visibility forests $F_1,\ldots,F_n$ (we may assume that these forests are spanning, since we can add isolated cubes to their representations).
Note that distinct components of $F_j$ and $F_{j'}$ share at most one vertex.
Label the vertices of $T$ using a breadth-first search starting at an arbitrary vertex $v_0$.
For each $j$, label each subtree of $T$ that is a component of $F_j$ as $T_{i,j}$ where $i$ is the lowest index of a vertex in the tree.

We will iteratively add cubes for the vertices in the trees $T_{i,j}$ in lexicographic order until we obtain a representation of $T$.
When $T_{i,j}$ is processed, $v_i$ has already been assigned a cube.
Assume that the cube $B$ is added at $(x_1,\ldots,x_n)$ when $T_{i,j}$ is processed.
We will ensure that the following conditions are satisfied:

\begin{enumerate}
\item[(1)] When $B$ is added, it will only see the cubes of its neighbors in $T_{i,j}$, and it will see those cubes in the direction of $\ee_j$.
\item[(2)] When $B$ is added, for each $j'\neq j$, there is no cube whose location lies in the set $\{(x_1\pm 2t,\ldots,x_{j'-1}\pm 2t,z,x_{j'+1}\pm 2t,\ldots,x_n\pm 2t)|z\in \RR\}$.
\end{enumerate}

Begin by placing the cube for $v_0$ at the origin.
The component $T_{0,1}$ of $F_1$ that contains $v_0$ has an $(n-1)$-cube orthogonal visibility representation, and by Lemma~\ref{lem:coords} we can assume that all $(n-1)$-cubes in the representation have distinct heights.
Place the $n$-cubes for the vertices in $T_{0,1}$ so that (i) all lines of sight between them are parallel to $\ee_1$, (ii) they form a representation of $T_{0,1}$, and (iii) their first coordinates all differ by at least $4t$.
Condition 1 clearly holds for each cube, and condition 2 holds since the $j$th coordinates of the cubes all differ by at least $4t$.

To process $T_{i,j}$, observe that $v_i$ has been assigned an $n$-cube.
Let $B_i$ be the cube assigned to $v_i$, and assume that $B_i$ was added to the representation when processing a component of $F_{j'}$.
By Lemma~\ref{lem:coords}, $T_{i,j}$ has a unit $(n-1)$-cube orthogonal visibility representation in which all cubes have distinct heights.
Fix such a representation $R$ so that (a) the height of the cube in $R$ that is assigned to $v_i$ is the $j$th coordinate of $B_i$, (b) every other $(n-1)$-cube in the representation has a height that is either greater than or less than the $j$th coordinate of all existing $n$-cubes in the partial representation of $T$ by at least $4t$, and (c) the heights of all the $(n-1)$-cubes in the representation differ by at least $4t$.
Align such a representation so that the visibilities are parallel to $\ee_j$, and for each $v\in V(T_{i,j})-v_i$ add an $n$-cube in the location of the corresponding $(n-1)$-cube in $R$.

Let $B$ be a cube that is added when $T_{i,j}$ is processed.
Since $T_{i,j}$ is connected and has at most $t$ vertices, each coordinate of $B$ except the $j$th differs from the corresponding coordinate of $B_i$ by at most $t$.
Since all cubes that are not in $T_{i,j}$ have locations that differ from the location of $B_i$ by at least $4t$ in a direction that is not parallel to $\ee_j$, it follows that $B$ can only see the cubes of vertices in $T_{i,j}$.
Since the $j$th coordinate of $B$ also differs from the $j$th coordinate of all other cubes by at least $4t$, conditions (1) and (2) hold for $B$.

It follows that by processing all $T_{i,j}$, we obtain an $n$-cube visibility representation of $T$.
\end{proof}

Theorem~\ref{TreetoTree} allows us to relate $\Upsilon_{h^{(n)}}(T)$ to $\Upsilon_{h^{(n-1)\perp}}(T)$ for a tree $T$.

\begin{theorem}\label{Upsilons}
If $T$ is a tree, then 
$\ds\Upsilon_{h^{(n)}}(T) = \CL{\frac{\Upsilon_{h^{(n-1)\perp}}(T)}{n}}.$
\end{theorem}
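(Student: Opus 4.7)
The plan is to establish the equality via two inequalities, using Theorem~\ref{TreetoTree} as the bridge in each direction.

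For the lower bound $\lceil \Upsilon_{h^{(n-1)\perp}}(T)/n \rceil \le \Upsilon_{h^{(n)}}(T)$, I would take an optimal decomposition $T = H_1 \cup \cdots \cup H_k$ with $k = \Upsilon_{h^{(n)}}(T)$. Each component of $H_i$ is an $n$-cube visibility tree, which by Theorem~\ref{TreetoTree} is the union of $n$ $(n-1)$-cube orthogonal visibility forests. Since distinct components of $H_i$ are vertex-disjoint, these component-level decompositions combine to write $H_i$ itself as the union of $n$ such forests, so $T$ is the union of $nk$ $(n-1)$-cube orthogonal visibility forests, giving the claimed inequality upon rearrangement.

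For the upper bound $\Upsilon_{h^{(n)}}(T) \le \lceil \Upsilon_{h^{(n-1)\perp}}(T)/n \rceil$, I would write $T = F_1 \cup \cdots \cup F_m$ with $m = \Upsilon_{h^{(n-1)\perp}}(T)$ and partition the forests into $k = \lceil m/n \rceil$ blocks of size at most $n$, padding with empty forests if needed. Let $G_i$ denote the union of the forests in the $i$th block. The aim is to show that each $G_i$ is an $n$-cube visibility forest, so that $T = G_1 \cup \cdots \cup G_k$ achieves the required arboricity bound.

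The key structural observation is that any component $C$ of $F_{a_j}$ lies inside a single component of $G_i$, because $C$ is connected using $F_{a_j}$-edges that are also edges of $G_i$. Consequently, for any component $T'$ of $G_i$, the restriction $F_{a_j}\cap T'$ equals the disjoint union of those components of $F_{a_j}$ whose vertex sets lie in $V(T')$. Each such component is an $(n-1)$-cube orthogonal visibility tree, so $F_{a_j}\cap T'$ is itself an $(n-1)$-cube orthogonal visibility forest. Theorem~\ref{TreetoTree} applied to the subtree $T'$ with its decomposition into the $n$ restricted forests then shows $T'$ is an $n$-cube visibility tree, so $G_i$ is an $n$-cube visibility forest.

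The step that initially looks most delicate is precisely the verification that $F_{a_j}\cap T'$ is a visibility forest, which a priori raises a monotonicity concern: subtrees of $(n-1)$-cube orthogonal visibility trees are not obviously themselves visibility trees. The structural observation above is what dissolves the worry, since restriction to a component of $G_i$ never splits a component of $F_{a_j}$; the restricted forest is simply a subcollection of visibility trees already appearing in the decomposition of $F_{a_j}$, so no genuine monotonicity lemma is required.
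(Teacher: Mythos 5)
Your proof is correct, and the lower-bound half is essentially identical to the paper's (decompose each $n$-cube visibility forest componentwise via Theorem~\ref{TreetoTree} and recombine). The upper-bound half, however, takes a genuinely more direct route. The paper does not simply group the optimal forests $F_1,\ldots,F_m$ into blocks of $n$; instead it first pads the decomposition to a multiset of $(n-1)$-cube orthogonal visibility subtrees covering each vertex exactly $\ell$ times, forms the intersection graph of these subtrees, invokes the fact that intersection graphs of subtrees of a tree are chordal (hence perfect, with $\chi=\omega=\ell$) to produce $\ell$ vertex-disjoint color classes, and only then groups $n$ classes at a time. You bypass the perfect-graph machinery entirely by observing that the given forests already constitute such a vertex-disjoint grouping, and your key structural point --- that a component of $F_{a_j}$ is connected by $G_i$-edges and hence can never be split across components of $G_i$, so the restriction of $F_{a_j}$ to a component $T'$ of $G_i$ is just a subcollection of components of $F_{a_j}$ (plus harmless isolated vertices, which Theorem~\ref{TreetoTree} explicitly tolerates) --- is exactly the verification needed to apply Theorem~\ref{TreetoTree} to each $T'$. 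That verification is in fact also implicitly required at the end of the paper's argument, where the union of $n$ color classes is asserted to be an $n$-cube visibility tree; you make it explicit. In short, your argument is simpler and loses nothing: the chordality detour would only be necessary if one started from a decomposition into trees with each vertex covered at most $\ell$ times rather than from an actual partition into $\ell$ forests, and the definition of $\Upsilon_{h^{(n-1)\perp}}$ hands you the latter.
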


\begin{proof}
Suppose that $\Upsilon_{h^{(n)}}(T)=k$, and decompose $T$ into $k$ unit $n$-cube visibility forests.
Each unit $n$-cube visibility forest is the union of $n$ unit $(n-1)$-cube orthogonal visibility forests, and therefore $\CL{\frac{\Upsilon_{h^{(n-1)\perp}}(T)}{n}} \le \Upsilon_{h^{(n)}} (T)$.

Now suppose that $\Upsilon_{h^{(n-1)\perp}}(T)=\ell$.
It follows that $T$ can be decomposed into $(n-1)$-cube orthogonal visibility trees so that each vertex is contained in at most $\ell$ of those trees.
Furthermore, by adding multiple copies of the $1$-vertex tree at each vertex, we can obtain a multiset of $(n-1)$-cube orthogonal visibility trees that are subtrees of $T$ such that (i) every vertex is contained in exactly $\ell$ members of the multiset, and (ii) the union of the trees in the multiset is $T$.
Construct an auxiliary graph $G$ in which each tree of the multiset is a vertex, and two vertices are adjacent if and only if they share a vertex in $T$.
Since $G$ is the intersection graph of subtrees of a tree, it follows that $G$ is a chordal graph \cite{Buneman, Gavril, Walter1, Walter2}.
Hence $G$ is perfect and $\chi(G)=\omega(G)$, where $\chi (G)$ is the chromatic number of $G$ and $\omega(G)$ is the size of the largest clique in $G$.
Furthermore, since the intersection of any two trees in the decomposition is at most a single vertex, $\omega(G)=\ell$.
Therefore, there is a coloring of the trees in the decomposition using $\ell$ colors so that no two trees of the same color share a vertex.
By Theorem~\ref{TreetoTree}, the union of $n$ $(n-1)$-cube orthogonal visibility forests is an $n$-cube visibility tree, so the union of $n$ color classes of the trees in the multiset is an $n$-cube visibility tree.
Thus, $T$ can be decomposed into $\CL{\ell/n}$ $n$-cube visibility trees.
Therefore, $\CL{\frac{\Upsilon_{h^{(n-1)\perp}}(T)}{n}} \ge \Upsilon_{h^{(n)}} (T)$.
\end{proof}

In~\cite{UnitBarVisNum}, Gaub et al.\ developed a fast algorithm to determine $h^{(1)\perp}(T)$ for a tree $T$ and obtain a $1$-cube orthogonal visibility representation of $T$.

\begin{theorem}[Gaub et al.~\cite{UnitBarVisNum}]~\label{thm:TreeAlg}
Let $T$ be a tree.
If $\Delta(T)\not\equiv 0\pmod 3$, then $h^{(1)\perp}=\CL{\Delta(T)/3}$.
If $\Delta(T)\equiv 0\pmod 3$, then $h^{(1)\perp}=\CL{\Delta(T)/3}$ or $h^{(1)\perp}=\CL{(\Delta(T)+1)/3}$, and there is a linear time algorithm that determines the correct value.
\end{theorem}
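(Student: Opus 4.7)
The plan is to prove Theorem~\ref{thm:TreeAlg} in three pieces: a lower bound via a local visibility count, a general upper bound via a BFS placement, and a refined analysis of the case $\Delta(T)\equiv 0\pmod 3$ that also yields the algorithm.

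For the lower bound $h^{(1)\perp}(T)\ge\CL{\Delta(T)/3}$, I would invoke Lemma~\ref{lem:coords} to assume all bars have distinct heights and then show that every bar in a representation of a tree has visibility to bars of at most three distinct other vertices. Classify each bar $B'$ that sees a fixed bar $B$ by whether the horizontal projection of $B'$ extends past $B$ on the left, on the right, or coincides with $B$; because both are unit bars, exactly one case holds. Since the neighbors of a common vertex in a tree form an independent set, the distinct-vertex visibility neighbors of $B$ must be pairwise non-visible. A short case check then gives at most two such neighbors above $B$ and at most two below; the ``two above plus two below'' configuration always forces a same-side pair (both extending left of $B$, say) to see one another through the unblocked end of $B$, a contradiction. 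Since each edge incident to $v$ must be carried by some bar of $v$, the lower bound follows.

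For the upper bound, root $T$ at an arbitrary vertex and process it in BFS order. For a vertex $v$ of degree $d$, allocate a block of $\CL{d/3}$ bars for $v$ at widely separated heights and with pairwise disjoint horizontal projections, so that bars of $v$ cannot see each other. Devote one slot on one bar of $v$ to the parent edge, and distribute the children of $v$ among the remaining $3\CL{d/3}-1$ slots using the ``two on one side, one on the other'' $T$-configuration at each bar. Large horizontal shifts between the subtrees hanging from different children (analogous to the shifts used in the proof of Theorem~\ref{TreetoTree}) prevent any cross-subtree visibility. When $\Delta(T)\not\equiv 0\pmod 3$, the strict inequality $3\CL{\Delta(T)/3}>\Delta(T)$ guarantees slack at the parent-facing bar, and the placement always succeeds.

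The hardest part, and the main obstacle, is the case $\Delta(T)\equiv 0\pmod 3$, where the slot budget is exactly tight and a local obstruction can force one additional bar at some vertex. I would try to show that the obstruction is bounded-radius---for instance, a maximum-degree vertex whose neighborhood contains enough further maximum-degree vertices that the parent-slot allocations clash---so that a single BFS over $T$, testing a constant-size condition at each vertex, suffices to decide between $\CL{\Delta(T)/3}$ and $\CL{(\Delta(T)+1)/3}$ in linear time. Pinning down the exact forbidden pattern and proving both directions of the characterization is the delicate combinatorial step that I expect to absorb most of the proof effort.
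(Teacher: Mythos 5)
This statement is imported verbatim from Gaub, Rose, and Wenger~\cite{UnitBarVisNum}; the present paper gives no proof of it, so there is no internal argument to compare yours against. Judged on its own terms, your proposal is not yet a proof. The lower bound sketch is essentially sound: it amounts to re-deriving, locally, the Dean--Veytsel fact that a unit bar in a representation of a triangle-free graph can see bars of at most three distinct other vertices, whence a vertex of degree $d$ needs at least $\CL{d/3}$ bars. (A cleaner route, in the spirit of this paper, is to apply Theorem~\ref{vis=arb} to reduce to single-component unit bar visibility trees and then quote Theorem~\ref{DV}, which caps the degree at $3$.) Your case analysis does need more care than you give it --- bars seen by $B$ from above and below can see each other around an end of $B$ unless blocked, and any blocker is itself a bar of some neighbor --- but this is repairable.

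The genuine gap is the case $\Delta(T)\equiv 0\pmod 3$, which is the entire content of the theorem beyond the easy bounds. You correctly identify that with $\Delta/3$ bars the slot budget is exactly tight, so every bar of a maximum-degree vertex must realize a full degree-$3$ configuration, but you then only say you ``would try to show'' the obstruction is a bounded-radius pattern testable by a constant-size local condition. That guess is almost certainly wrong in form: whether $\Delta/3$ bars suffice is not decided by a fixed-radius forbidden pattern but by a global feasibility question --- whether $T$ decomposes into forests of subdivided caterpillars of maximum degree $3$ with each vertex in at most $\Delta/3$ of them --- and the linear-time algorithm of~\cite{UnitBarVisNum} resolves it by a bottom-up traversal that propagates, for each rooted subtree, whether the edge to the parent can be absorbed without charging an extra bar to the parent. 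Until you pin down that characterization and prove both that the obstruction forces $\CL{(\Delta(T)+1)/3}$ and that its absence permits a construction achieving $\Delta(T)/3$, the theorem is not established; everything else in your write-up is the routine part.
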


Using this algorithm and Theorem~\ref{Upsilons}, we can determine the $2$-cube visibility number of a tree.

\begin{theorem}\label{ur(T)}
Let $T$ be a tree. 
If $\Delta(T)\not\equiv 0\pmod 6$, then $$h^{(2)}(T) = \CL{\frac{\Delta(T)}{6}}.$$
If $\Delta(T)\equiv 0\pmod 6$, then $$h^{(2)}(T) = \CL{\frac{\Delta(T)}{6}} \text{ or } h^{(2)}(T) = \CL{\frac{\Delta(T)+1}{6}},$$
and there is a linear time algorithm to determine the exact value.
\end{theorem}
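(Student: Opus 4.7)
\medskip
\noindent\textbf{Proof proposal.}

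The plan is to combine the three previous results (Theorems~\ref{vis=arb}, \ref{Upsilons}, and \ref{thm:TreeAlg}) to reduce $h^{(2)}(T)$ to a ceiling of a ceiling involving $\Delta(T)$, and then resolve the resulting cases modulo $6$. First, by applying Theorem~\ref{vis=arb} in both the standard and orthogonal versions, I get $h^{(2)}(T)=\Upsilon_{h^{(2)}}(T)$ and $h^{(1)\perp}(T)=\Upsilon_{h^{(1)\perp}}(T)$. Setting $n=2$ in Theorem~\ref{Upsilons} then gives
$$h^{(2)}(T) \;=\; \Upsilon_{h^{(2)}}(T) \;=\; \CL{\frac{\Upsilon_{h^{(1)\perp}}(T)}{2}} \;=\; \CL{\frac{h^{(1)\perp}(T)}{2}}.$$
Thus, understanding $h^{(2)}(T)$ is reduced to applying Theorem~\ref{thm:TreeAlg} to $h^{(1)\perp}(T)$ and taking the ceiling of half of the result.

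Next I would carry out a case analysis on $\Delta(T) \pmod 6$, writing $\Delta(T)=6m+r$ for $r\in\{0,1,\dots,5\}$. In each case I evaluate $h^{(1)\perp}(T)$ via Theorem~\ref{thm:TreeAlg}, then divide by $2$ and take the ceiling. For $r\in\{1,2,4,5\}$, we have $\Delta(T)\not\equiv 0\pmod 3$, so $h^{(1)\perp}(T)=\CL{\Delta(T)/3}$ is determined, and a direct calculation shows $\CL{\CL{\Delta(T)/3}/2}=\CL{\Delta(T)/6}$. For $r=3$, both candidates $\CL{\Delta(T)/3}=2m+1$ and $\CL{(\Delta(T)+1)/3}=2m+2$ from Theorem~\ref{thm:TreeAlg} satisfy $\CL{\cdot/2}=m+1=\CL{\Delta(T)/6}$, so the ambiguity is absorbed and $h^{(2)}(T)=\CL{\Delta(T)/6}$ exactly. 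For $r=0$, however, $\CL{\Delta(T)/3}=2m$ and $\CL{(\Delta(T)+1)/3}=2m+1$ yield $\CL{\cdot/2}=m$ and $m+1$ respectively, so $h^{(2)}(T)\in\{\CL{\Delta(T)/6},\CL{(\Delta(T)+1)/6}\}$, matching the statement.

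For the algorithmic claim, the linear time algorithm of Theorem~\ref{thm:TreeAlg} determines $h^{(1)\perp}(T)$ exactly in linear time; applying the transformation $h^{(2)}(T)=\CL{h^{(1)\perp}(T)/2}$ is $O(1)$, so the resulting procedure for $h^{(2)}(T)$ is also linear. I do not expect any genuine obstacle: the only care required is verifying the six residue classes, and in particular confirming that only the $r=0$ case transmits the Gaub--Rose--Wenger ambiguity through the halving. The bookkeeping can be presented compactly by organizing the six subcases into a single table or by grouping $r\in\{1,2\}$ and $r\in\{4,5\}$ together, so the write-up should be short.
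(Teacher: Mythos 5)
Your proposal is correct and follows essentially the same route as the paper: both reduce to $h^{(2)}(T)=\CL{h^{(1)\perp}(T)/2}$ via Theorems~\ref{vis=arb} and~\ref{Upsilons} and then apply Theorem~\ref{thm:TreeAlg}, with your write-up merely making the residue-class bookkeeping (in particular, that the $\Delta(T)\equiv 3\pmod 6$ ambiguity is absorbed by the halving while the $\Delta(T)\equiv 0\pmod 6$ ambiguity is not) more explicit than the paper's.
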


\begin{proof}
If $\Delta(T)\not \equiv 0\pmod 6$, then $\CL{\frac{\Delta(T)}{3}}\le h^{(1)\perp}(T)\le \CL{\frac{\Delta(T)+1}{3}}$, and by Lemma~\ref{Upsilons}, $h^{(2)}(T)=\CL{\frac{h^{(1)\perp}(T)}{2}}=\CL{\frac{\Delta(T)}{6}}$.
If $\Delta(T)\equiv 0\pmod 6$, then $h^{(1)\perp}(T)=\CL{\frac{\Delta(T)}{3}}$ or $h^{(1)\perp}(T)= \CL{\frac{\Delta(T)+1}{3}}$, and there is an algorithm to determine the the correct value in linear time.
Therefore, $h^{(2)}(T)=\CL{\frac{\Delta(T)}{6}}$ or $h^{(2)}(T)=\CL{\frac{\Delta(T)+1}{6}}$ and there is an algorithm to determine the correct value.
\end{proof}

The algorithm UNIT\_BAR\_TREE from~\cite{UnitBarVisNum} that determines the value of $h^{(1)\perp}(T)$ for a tree depends on the Dean-Veytsel characterization of $1$-cube orthogonal visibility trees from~\cite{UnitBar}.
Thus, similar results in the vein of Theorem~\ref{ur(T)} for higher dimensions would require and understanding the structure of $n$-cube orthogonal visibility trees for $n\ge 2$.

An {\it intersection representation} of a graph $G$ is an assignment of the vertices of $G$ to sets such that two vertices are adjacent if and only if their sets have nonempty intersection.
A graph has {\it cubicity} $n$ if it has an intersection representation where each vertex is assigned to an axis-aligned closed unit $n$-cube in $\RR^n$, but it does not have an intersection representation where each vertex is assigned to an axis-aligned closed unit $(n-1)$-cube in $\RR^{n-1}$.
Note that if a graph has cubicity $n$, then it has a representation as an intersection graph of a set of unit $m$-cubes for all $m\ge n$. 
We show that the structure of unit $n$-cube orthogonal visibility trees is closely related to the structure of trees with cubicity at most $n$.

First we address a slight discrepancy: in our visibility representations we require lines of sight to be cylindrical channels with positive diameter, and in representations as intersection graphs of cubes we may edges represented by intersections that do contain any $\epsilon$-ball (which would correspond to the diameter of the channel in the visibility graph).

\begin{lemma}\label{lem:epsilon}
If $T$ is tree with cubicity $n$, then there is a representation of $T$ as the intersection graph of $n$-cubes and an $\epsilon>0$ such that such that all nonempty intersections of the representation contain an $\epsilon$-ball.
\end{lemma}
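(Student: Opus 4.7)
The plan is to start from an arbitrary unit $n$-cube intersection representation of $T$ and perturb the cube positions slightly so that every edge gets a full-dimensional (thick) overlap while every non-edge remains disjoint. Because $T$ is a tree, we can root it and push perturbations down from the root without creating contradictions. Concretely, fix a representation with centers $c_v\in\RR^n$; two unit cubes satisfy $B_u\cap B_v\neq\nul$ iff $|c_u^{(i)}-c_v^{(i)}|\le 1$ for all $i$, and $B_u\cap B_v$ contains an $\epsilon$-ball iff each of these quantities is at most $1-2\epsilon$. For every non-edge $uv$, let $\delta_{uv}=\max_i(|c_u^{(i)}-c_v^{(i)}|-1)>0$, and set $\delta=\min\delta_{uv}$ over all non-edges. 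I would then pick $\epsilon>0$ small enough that $2\epsilon\cdot\mathrm{diam}(T)<\delta$ (and $\epsilon<1/4$).

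Root $T$ at an arbitrary vertex $r$ and process non-root vertices in BFS order. When I process $v$ with parent $u$ (already processed), I choose a shift vector $s_v\in\RR^n$ with $\|s_v\|_\infty\le 2\epsilon$ such that, after applying $s_v$ to $B_v$, the intersection $B_u\cap(B_v+s_v)$ contains an $\epsilon$-ball. Then I apply $s_v$ to $B_v$ and to every descendant of $v$ in $T$ (moving the entire subtree rigidly). The key observation that makes this step feasible is that all the shifts accumulated on $v$ from its strict ancestors have also been applied to $u$ (since $u$ is on the same root-to-$v$ path), so the relative position of $u$ and $v$ when $v$ is processed is exactly $c_u-c_v$. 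Consequently, the required per-coordinate correction is at most $|c_u^{(i)}-c_v^{(i)}|-(1-2\epsilon)\le 2\epsilon$, so a valid $s_v$ exists within the allowed bound.

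The main thing to verify is that no non-edge gets created. For any two vertices $w,y$ in $T$, the total shifts $S_w$ and $S_y$ are sums over the shifts attached to vertices on the root-to-$w$ and root-to-$y$ paths; contributions from common ancestors cancel, so $S_w-S_y$ is a telescoping sum of at most $d_T(w,y)$ vectors each of $\ell_\infty$-norm at most $2\epsilon$. Hence every coordinate of the relative shift is bounded by $2\epsilon\cdot\mathrm{diam}(T)<\delta$. For a non-edge $wy$, some coordinate originally has $|c_w^{(i)}-c_y^{(i)}|\ge 1+\delta$, so after perturbation it still exceeds $1$, and $B_w,B_y$ remain disjoint. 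For each tree edge, the $\epsilon$-ball overlap built when the child was processed is never disturbed afterwards, because later shifts move only proper descendants of the child. The main obstacle I anticipate is purely bookkeeping: one must simultaneously control (a) that a feasible $s_v$ always exists, and (b) that the accumulated relative shift between any two vertices stays below $\delta$; both are handled by the single uniform bound $\|s_v\|_\infty\le 2\epsilon$ combined with the choice of $\epsilon$ relative to $\delta$ and $\mathrm{diam}(T)$.
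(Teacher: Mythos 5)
Your proof is correct and rests on the same key idea as the paper's: rigidly translating everything on one side of a tree edge by a small vector to thicken that edge's intersection, using the strictly positive separation of non-adjacent cubes to guarantee no new intersections appear. The only difference is organizational --- the paper fixes one thin edge at a time via a minimality argument on the number of thin intersections, while you perform all corrections in a single rooted BFS pass with one global $\epsilon$ and an explicit bound on the accumulated drift; both are valid.
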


\begin{proof}
Let $R$ be an intersection representation of $T$ with the minimum number of intersections that do not contain an $\epsilon$-ball for all $\epsilon >0$.
If there are no such intersections, then the result holds.
Assume that $u$ and $v$ are two adjacent vertices whose cubes intersect in a set that contains no $\epsilon$-ball for every $\epsilon >0$.
Let $T_u$ be the component of $T-uv$ that contains $u$, and let $T_v$ be the component that contains $v$.
There exists $\delta>0$ such that for every ordered pair of vertices $(u',v')$ where $u'\in T_u$ and $v'\in T_v$ and $(u',v')\neq (u,v)$ the cubes of $u'$ and $v'$ have locations that differ by at least $1+\delta$ in every coordinate.
Let $\mathbf{w}$ be the vector $\frac\delta 2(\sgn(v_1-u_1),\ldots,\sgn(v_n-u_n))$ where $\sgn$ denotes the sign function; translating the cube of $u$ by $\mathbf w$ will move it towards the cube of $v$ by $\frac\delta 2$ in every direction.
For each vertex $u'\in V(T_u)$, translate the cube of $u'$ by $\mathbf w$.
The only intersection of cubes affected by this translation is the intersection of the cubes of $u$ and $v$, which now contains a $\delta/4$-ball.
This contradicts the minimality of $R$. 
\end{proof}

Let $T$ be a tree and let $v\in T$.
A {\it path expansion} of length $k$ at $v$ is the process of replacing $v$ by a path $v_0,v_1,\ldots,v_k$ and partitioning the set of neighbors of $v$ so that each neighbor of $v$ is now adjacent to exactly one of $v_0$ or $v_k$.

\begin{theorem}\label{thm:pathexp}
A tree $T$ is a unit $n$-cube orthogonal visibility graph if and only if it can be obtained from a tree $T'$ with cubicity at most $n$ by performing a path expansion at each vertex of $T'$ (perhaps some with length $0$).
\end{theorem}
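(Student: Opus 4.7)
My plan hinges on one geometric fact: two unit $n$-cubes in $\mathbb{R}^n$ at the same location coincide, while at any two distinct locations neither contains the other. Consequently, within a \emph{column} (a set of cubes in an orthogonal visibility representation all sharing a common projection in $\mathbb{R}^n$), only height-consecutive cubes can see each other, since any non-consecutive pair is completely blocked by an intermediate column cube, while no single cube with a different projection can fully block a cylindrical channel inside a unit $n$-cube. Columns therefore realize exactly paths in $T$, mirroring the output of a path expansion.

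For the reverse direction, I would take a unit $n$-cube intersection representation of $T'$ satisfying Lemma~\ref{lem:epsilon}, writing $C_v$ for $v$'s cube, and construct the visibility representation of $T$ by placing, for each $v \in V(T')$, a stack of $k_v + 1$ cubes in $\mathbb{R}^{n+1}$ at projection $C_v$ with distinct heights. I would process $V(T')$ in a BFS order from an arbitrary root, and when the column for $v$ with parent $p$ is added, I would orient $v$'s column so that the endpoint $v_i$ representing the edge of $T$ corresponding to $vp$ sits at a height immediately adjacent to $p_j$, and push the remainder of the column far from the rest of the representation on the side of $p_j$ opposite the prior cubes. The $\epsilon$-ball inside $C_v \cap C_p$ furnishes the cylindrical channel realizing $v_ip_j$; the other cubes of $v$'s column are blocked from $p_j$ by $v_i$ itself; and the disjointness of $C_v$ from projections of $T'$-vertices other than $p$ rules out unintended visibilities with previously placed cubes.

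For the forward direction, I would start with a unit $n$-cube orthogonal visibility representation $R$ of $T$ with distinct heights (Lemma~\ref{lem:coords}), chosen to minimize the number of distinct projections in $\mathbb{R}^n$. I would partition $V(T)$ into equivalence classes by the connected components of the subgraph of $T$ induced by vertices with a common projection. The geometric observation above forces each class to be a height-ordered path in $T$: no branching can survive, because a third same-projection class-neighbor of any vertex would either block or be blocked by an existing one, and heights along the $T$-path of the class must be monotonic for the same reason. Next, I would argue by the minimality of $R$ that an interior vertex of any class has no $T$-neighbor outside the class: such an external neighbor would permit a local modification merging its projection into the class and so decrease the number of distinct projections. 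Contracting each class produces a tree $T'$, and the projections then yield a unit $n$-cube intersection representation of $T'$, after a similar minimality argument guaranteeing that non-adjacent classes' projections can be made disjoint. Un-contracting each class as its path then recovers $T$ as a path expansion of $T'$.

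The main obstacle is the pair of minimality-based claims in the forward direction: (a) interior class vertices have no external neighbors, and (b) non-adjacent classes' projections can be made disjoint. Each demands an explicit local modification of $R$ --- shifting selected cubes --- that strictly reduces the projection count while preserving all edges of $T$. The tree structure bounds the propagation of perturbations, and translating within $\mathbb{R}^n$ provides the needed freedom, but a delicate case analysis of the cubes adjacent to the perturbed region will be required to ensure that no edge of $T$ is destroyed nor inadvertently created.
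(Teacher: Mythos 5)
Your high-level strategy matches the paper's: build the representation by stacking columns of cubes over a unit $n$-cube intersection representation of $T'$, and recover $T'$ by projecting onto the span of $\ee_1,\ldots,\ee_n$. The construction direction, however, proceeds differently: the paper avoids your sequential BFS placement entirely by $2$-coloring the edges of $T'$ according to the neighbor partitions of the path expansions and assigning each vertex the integer height $b_{vu}-r_{vu}$, so that every neighbor of $v$ automatically sits one level above or below $v$ on the correct side of $v$'s stack. Your placement scheme can be made to work, but it is underspecified at the one point where it matters: when a child $u$ of $v$ attaches in $T$ to the \emph{same} endpoint $v_i$ of $v$'s path that is height-adjacent to the parent cube $p_j$. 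If $u$'s connecting cube is placed on the far side of $v_i$ (between $v_i$ and the pushed-away remainder of $v$'s column), nothing blocks it from the higher cubes of $v$'s column, since those share the full projection $C_v\supseteq C_u\cap C_v$ and no other cube lies in between; the correct placement is between $p_j$ and $v_i$, using $C_u\cap C_p=\varnothing$. You should state which side is used and why it is safe.

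The genuine gap is in the forward direction. Your two minimality claims are exactly where the content lies, and the mechanism you propose for claim (a) cannot work as described. Suppose $x$ is an interior vertex of a class with class-neighbors $x^-$ below and $x^+$ above (all at projection $C$), and $y$ is an external $T$-neighbor of $x$ at projection $C'\ne C$. Translating $y$ so that its projection becomes $C$ forces $y$, in order to still see $x$, to sit at a height strictly between $x^-$ and $x$ or strictly between $x$ and $x^+$; but a cube at projection $C$ in that height range covers every admissible channel between those two class cubes, so the move necessarily destroys the edge $xx^-$ or $xx^+$. Hence the proposed local modification does not preserve $T$, and the ``decrease the number of distinct projections'' induction has no valid step. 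Claim (b) has a similar problem: retranslating one class's projection to separate it from a non-adjacent class perturbs its intersections with \emph{all} of its actual neighbors simultaneously, and you give no argument that this can be done without creating or destroying $\epsilon$-ball overlaps elsewhere. There is also a smaller unaddressed point in your opening geometric observation: a single off-column cube cannot block two consecutive column cubes, but two or more off-column cubes can jointly cover a unit projection, so ``classes are paths'' needs an argument (presumably via the cycle such blockers would force in $T$) rather than the single-cube fact alone. For comparison, the paper's forward direction simply projects and asserts that the result is an intersection representation of a tree whose fibers are paths; it does not introduce your minimization machinery, so it does not inherit the broken step, though it leaves the same verification burden implicit.
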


\begin{proof}
Let $T$ be obtained from the tree $T'$ with cubicity at most $n$ by performing a path expansion at each vertex of $T'$.
Consider an $n$-cube intersection representation of $T'$ in which all nonempty intersections contain an $\epsilon$-ball for some $\epsilon>0$, which is guaranteed by Lemma~\ref{lem:epsilon}.
Because $T'$ contains no cycles, it follows that each point in $\RR^n$ is contained in the $n$-cubes of at most two vertices in $T'$.
Place the representation of $T'$ into $\RR^{n+1}$ so that the cubes occupy the subspace spanned by $\{\ee_1,\ldots,\ee_n\}$.

Fix a vertex $v$ in $T'$.
Starting from $v$ we 2-color the edges of $T'$ using a breadth-first search in the following fashion.
When performing the path-expansion at $v$ to obtain $T$, the neighbors of $v$ in $T'$ are partitioned into two sets $N_1(v)$ and $N_2(v)$.
Color the edges joining $v$ and vertices in $N_1(v)$ blue, and color the edges joining $v$ and vertices in $N_2(v)$ red.
When processing the edges at a vertex $u\neq v$, exactly one edge $uu'$ is already colored, where $u'$ is the predecessor of $u$ in the breadth first search.
Suppose that $u'\in N_i(u)$ when the path expansion at $u$ is performed.
Color all edges joining $u$ to vertices in $N_i(u)$ the same color as $uu'$.
Color all other edges at $u$ with the other color.

Now place the $n$-cube for $v$ in the subspace of $\RR^{n+1}$ spanned by $\{\ee_1,\ldots,\ee_n\}$ (so that the value of the $(n+1)$st coordinate is $0$).
For $u\in V(T)$, let $b_{vu}$ denote the number of blue edges on the path from $v$ to $u$, and let $r_{vu}$ denote the number of red edges on the path from $v$ to $u$.
For each $u$ in $v$, let the $(n+1)$st coordinate of the corresponding $n$-cube in $\RR^{n+1}$ be given by $b_{vu}-r_{vu}$.
This is a unit $n$-cube orthogonal visibility representation of $T'$.

Let $v$ be a vertex in $T'$ where a path expansion is performed that produces a path of length $k$.
To construct a unit $n$-cube orthogonal visibility representation of $T$, place a stack of $k$ evenly spaced unit $n$-cubes at heights between $b_{vu}-r_{vu}-\frac 13$ and $b_{vu}-r_{vu}+\frac 13$.

Now suppose that $T$ has an orthogonal $n$-cube representation.
Project the cubes onto the subspace of $\RR^{n+1}$ spanned by $\{\ee_1,\ldots,\ee_n\}$.
The result is an intersection representation using unit $n$-cubes of a tree $T'$ with cubicity at most $n$.
The cubes that project onto the same cube form a path in $T$, and this process can be reversed using path expansions.
\end{proof}

Trees with cubicity $1$ are clearly paths, so the characterization of unit bar visibility graphs is a corollary to Theorem~\ref{thm:pathexp}.

\begin{theorem}\label{DV}[Dean and Veytsel~\cite{UnitBar}]
A tree is a unit bar visibility graph if and only if it is a subdivided caterpillar with maximum degree $3$.
\end{theorem}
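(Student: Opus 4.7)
The plan is to derive Theorem~\ref{DV} as a direct corollary of Theorem~\ref{thm:pathexp} applied with $n=1$. Since a unit bar visibility graph is exactly a unit $1$-cube orthogonal visibility graph, Theorem~\ref{thm:pathexp} reduces the problem to characterizing the trees obtained by performing a path expansion at every vertex of a tree $T'$ with cubicity at most $1$.

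First I would identify the trees of cubicity at most $1$: a graph has cubicity at most $1$ if and only if it is a unit interval graph, and the only trees that are unit interval graphs are paths (including the single vertex), since $K_{1,3}$ is not a unit interval graph and therefore no tree with a vertex of degree $\ge 3$ can be one. Thus the only candidates for $T'$ are paths.

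Next I would analyze what trees arise from a path $P = u_1 u_2 \cdots u_m$ by performing a path expansion at each vertex. At an internal vertex $u_i$ with neighbor set $\{u_{i-1},u_{i+1}\}$, the partition is either (a) split, in which case the path expansion merely subdivides the two incident edges of $u_i$ and introduces no vertex of degree greater than $2$, or (b) places both neighbors on the same side, in which case a single vertex of degree $3$ appears on the ``spine'' (the image of $P$), with a pendant path of length $k$ attached. At an endpoint of $P$ the expansion only extends or subdivides the end. In every case the newly introduced vertices have degree at most $2$, and every vertex of degree $\ge 3$ lies on the spine and has degree exactly $3$. Consequently the resulting tree is a subdivided caterpillar with maximum degree $3$.

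For the converse, given any subdivided caterpillar $T$ with $\Delta(T)\le 3$, I would recover $P$ by un-subdividing the spine, and then at each spine vertex perform a path expansion whose length equals the length of the attached leg (taking length $0$ when no leg is present) and whose partition places both spine neighbors on one side and the leg endpoint on the other. Because $\Delta(T)\le 3$, no spine vertex has more than one leg, so this assignment is always well-defined and produces $T$. The main bookkeeping point — and the closest thing to an obstacle — is verifying that the ``maximum degree $3$'' condition is exactly what falls out of path expansions on a path (no more, no less), but this follows immediately from the observation that each spine vertex can gain at most one extra edge through its path expansion while all other new vertices have degree at most $2$.
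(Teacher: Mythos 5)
Your proposal is correct and follows exactly the paper's route: the paper derives Theorem~\ref{DV} as a corollary of Theorem~\ref{thm:pathexp} via the one-line observation that trees with cubicity $1$ are paths. You have simply filled in the (routine but worthwhile) verification that path expansions applied to a path yield precisely the subdivided caterpillars of maximum degree $3$.
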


At the moment, results similar to Thoerem~\ref{DV} appear out of reach since the complexity of determining even if a tree has cubicity $2$ is still unknown~\cite{BBCRS}.
However, we are able to give very strong bounds on both the $n$-cube visibility numbers and $n$-cube orthogonal visibility numbers of trees that are equal in many cases.

\begin{theorem}
If $T$ is a tree, then
$$\CL{\frac{\Delta(T)}{2^n+1}}\le h^{(n)\perp}(T)\le \CL{\frac{\Delta(T)+1}{2^n+1}}$$
and
$$\CL{\frac{\Delta(T)}{n(2^{n-1}+1)}}\le h^{(n)}(T)\le \CL{\frac{\Delta(T)+1}{n(2^{n-1}+1)}}.$$
\end{theorem}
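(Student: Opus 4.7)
The plan is to prove the bounds on $h^{(n)\perp}(T)$ directly and then derive the bounds on $h^{(n)}(T)$ using Theorems~\ref{vis=arb} and~\ref{Upsilons}.

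For the lower bound on $h^{(n)\perp}(T)$, I will first establish the geometric lemma that in any $n$-cube orthogonal visibility forest, a single $n$-cube $B$ sees at most $2^n+1$ other cubes. Project all cubes onto the subspace spanned by $\ee_1,\ldots,\ee_n$. Since the representation is a forest (by Theorem~\ref{vis=arb} we may assume each component is a tree), any two cubes $C,C'$ that both see $B$ cannot themselves see each other, else $B,C,C'$ forms a $3$-cycle. On one side of $B$, at most $2^n$ cubes can have pairwise interior-disjoint projections each intersecting $B$'s projection in an open set, corresponding to the $2^n$ ``quadrants'' of $B$'s projection. Filling one side with $2^n$ such cubes forces any cube on the opposite side to have projection contained in $B$'s projection (otherwise it would see one of the $2^n$ tiling cubes around $B$ through a region outside $B$'s projection), admitting at most one further visible cube. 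Given this lemma, a vertex of degree $\Delta$ using $t$ cubes has at most $t(2^n+1)$ visibility edges leaving its cubes, so $t\ge \lceil \Delta/(2^n+1)\rceil$.

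For the upper bound on $h^{(n)\perp}(T)$, I will construct a representation explicitly. Root $T$ at an arbitrary vertex and process vertices in BFS order, assigning $t_v=\lceil (d_v+1)/(2^n+1)\rceil$ cubes to each vertex $v$. When $v$ is processed, one cube $B_1(v)$ has already been placed to see a cube of $v$'s parent; the other $t_v-1$ cubes of $v$ are placed in disjoint components of the cube forest, separated by very different heights via Lemma~\ref{lem:coords} so that no two cubes of $v$ see each other. Children of $v$ are then introduced, each placed so its cube sees exactly one cube of $v$ and no other already-placed cube. The cube $B_1(v)$ realizes at most $2^n$ child-edges (one slot is spent on the parent), while each subsequent cube of $v$ can realize up to $2^n+1$ child-edges; the chosen value of $t_v$ provides enough slots to accommodate all $d_v-1$ children with the single unit of slack that accounts for the ``$+1$''. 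The bounds on $h^{(n)}(T)$ then follow by chaining Theorem~\ref{vis=arb} with Theorem~\ref{Upsilons}, which yields
$$h^{(n)}(T)=\left\lceil h^{(n-1)\perp}(T)/n\right\rceil,$$
and substituting the just-proved bounds using the identity $\lceil \lceil a/b\rceil/c\rceil=\lceil a/(bc)\rceil$ for positive integers.

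The main obstacle will be the geometric consistency of the upper-bound construction: one must place cubes so that every edge of $T$ is realized as a visibility, no two cubes of the same vertex see each other, and no spurious visibility edges arise between cubes from unrelated parts of the construction (here the wide height-separation trick from the proof of Theorem~\ref{TreetoTree} is critical). The case analysis supporting the $2^n+1$ bound in the lower-bound lemma, particularly verifying that filling one side of $B$ with $2^n$ cubes truly forces the opposite side to admit only one additional visible cube in a forest, is the other delicate technical point.
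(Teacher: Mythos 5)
Your overall architecture --- a degree bound of $2^n+1$ per cube plus a counting argument for the lower bound, a star-by-star construction for the upper bound, and then chaining through Theorems~\ref{vis=arb} and~\ref{Upsilons} with the identity $\lceil\lceil a/b\rceil/c\rceil=\lceil a/(bc)\rceil$ to get the $h^{(n)}$ bounds --- is sound in outline, and the reduction of the $h^{(n)}$ bounds to the $h^{(n)\perp}$ bounds is a legitimate alternative to the paper's direct treatment. But both of your key technical claims are left with genuine gaps, and both are exactly the points where the paper instead invokes Theorem~\ref{thm:pathexp}, which you never use. For the lower bound, your lemma that a cube $B$ sees at most $2^n+1$ others rests on the assertion that cubes seeing $B$ from the same side have pairwise interior-disjoint projections. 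That does not follow from the absence of a $3$-cycle: two cubes above $B$ with overlapping projections need not see each other if a third cube of the representation blocks the overlap region, and ruling out such blocking configurations (together with the unexamined case analysis of how the visible cubes distribute between the two sides of $B$) is precisely the hard part, which you acknowledge but do not resolve. The paper avoids this entirely: by Theorem~\ref{thm:pathexp} every $n$-cube orthogonal visibility tree is a path expansion of a tree of cubicity at most $n$; the latter has $\Delta\le 2^n$ by the classical corner-counting argument for pairwise disjoint unit cubes meeting a fixed unit cube, and a path expansion increases any degree by at most $1$.

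For the upper bound, your BFS construction assumes at every step that a single cube can see $2^n+1$ (or $2^n$ plus the parent) mutually non-seeing cubes, but you never exhibit such a configuration; this is the realizability of $K_{1,2^n+1}$ as an $n$-cube orthogonal visibility graph, which is not obvious and is again supplied in the paper by Theorem~\ref{thm:pathexp} applied to a cubicity-$n$ intersection representation of $K_{1,2^n}$. Once that single star is known to be realizable, the paper needs no global geometric construction at all: it greedily decomposes $T$ into $\lceil(\Delta(T)+1)/(2^n+1)\rceil$ star forests of maximum degree $2^n+1$ and applies Theorem~\ref{vis=arb}, which is considerably lighter than verifying non-interference in a full BFS placement. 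To repair your write-up with the least effort, replace both of your geometric arguments with appeals to Theorem~\ref{thm:pathexp} and the cubicity facts $\Delta(T')\le 2^n$ and $\mathrm{cub}(K_{1,2^n})\le n$.
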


\begin{proof}
First note that if a tree $T$ has cubicity $n$, then $\Delta(T)\le 2^n$.
Therefore, by Theorem~\ref{thm:pathexp}, if $T$ is an $n$-cube orthogonal visibility graph, then $\Delta(T)\le 2^n+1$ (obtained by performing a path expansion in which the partition of the neighbors of $v$ has only one nonempty set).
Therefore, a decomposition of a tree $T$ into $n$-cube orthogonal visibility forests requires at least $\CL{\frac{\Delta(T)}{2^n+1}}$ forests.

By Theorem~\ref{TreetoTree}, if $T$ is a $n$-cube visibility graph, then $\Delta(T)\le n(2^{n-1}+1)$.
Therefore, a decomposition of a tree $T$ into $n$-cube orthogonal visibility forests requires at least $\CL{\frac{\Delta(T)}{n(2^{n-1}+1)}}$ forests.

Also, note that $K_{1,2^n}$ has cubicity $n$.
It then follows from Theorem~\ref{thm:pathexp} that $K_{1,2^n+1}$ is an $n$-cube orthogonal visibility graph, and from Theorem~\ref{TreetoTree} that $K_{1,n(2^{n-1}+1)}$ is an $n$-cube visibility graph.
A tree $T$ can be greedily decomposed into $\CL{\frac{\Delta(T)+1}{k}}$ forests with maximum degree $k$ in which every component is a star.
\end{proof}

\section{Conclusion}

We conclude with some open problems and directions for further research.

\begin{question}
For $n\ge 2$, is there a simple characterization of trees with $n$-cube orthogonal visibility representations?
\end{question}

Theorem~\ref{thm:pathexp} suggests the following more fundamental question, which is not being posed for the first time here (see~\cite{BBCRS} and references therein).

\begin{question}
For $n\ge 2$, is there a simple characterization of graphs with cubicity $n$?
\end{question}

Of course one can also consider visibility numbers of graphs that are not trees.
In his master's thesis~\cite{Peterson}, the first author studied rectangle and unit rectangle visibility numbers of a variety of graphs including complete graphs and complete bipartite graphs.
The following is perhaps the most natural starting point for a systematic study of visibility numbers in higher dimensions.

\begin{question}
How do $h^{(n)}(K_t)$ and $h^{(n)\perp}(K_t)$ grow as functions of $n$ and $t$?
\end{question}

\end{document}